\newtheorem{theorem}{Theorem}
\newtheorem{corollary}[theorem]{Corollary}
\theoremstyle{definition}
\newtheorem{remark}[theorem]{Remark}
\begin{document}
\title{New upper and lower bounds for the additive degree-Kirchhoff index}
\author{\emph{Monica Bianchi $^{1}$} \thanks{%
e-mail: monica.bianchi@unicatt.it} \and \emph{Alessandra Cornaro $^{1}$}
\thanks{%
e-mail: alessandra.cornaro@unicatt.it} \and \emph{Jos\'{e} Luis Palacios $%
^{2}$} \thanks{%
e-mail: jopala@usb.ve} \and \emph{Anna Torriero$^{1}$} \thanks{%
e-mail: anna.torriero@unicatt.it} }
\date{ }

\maketitle

\centerline{$^{{}^1}$ Department of Mathematics and Econometrics,  Catholic University, Milan,
Italy.}

\centerline{$^{{}^2}$ Department of Scientific Computing and Statistics,
Sim\'on Bol\'{\i}var University, Caracas, Venezuela.}

\begin{abstract}
Given a simple connected graph on $N$ vertices with size $|E|$ and degree sequence $d_{1}\leq d_{2}\leq ...\leq d_{N}$, the aim of this paper is to exhibit new upper and lower bounds for the
additive degree-Kirchhoff index in closed forms, not containing effective
resistances but a few invariants $(N,|E|$
and the degrees $d_{i}$)  and applicable in general contexts. In our arguments we follow a dual approach: along with a traditional toolbox of
inequalities we also use a relatively newer method in Mathematical Chemistry,
based on the majorization and Schur-convex functions. Some
theoretical and numerical examples are provided, comparing the bounds obtained here and those previously known in the literature.
\end{abstract}

\textbf{Keywords}: Majorization; Schur-convex functions; expected hitting times.
\section{Introduction}

The Kirchhoff index $R(G)$ of a connected undirected graph $G=(V,E)$ with vertex set $\{1, 2, \ldots, N\}$ and edge set $E$ was defined by Klein and Randi\'c \cite{KleRan} as
$$R(G)=\sum_{i<j}R_{ij},$$
where $R_{ij}$ is the effective resistance of the edge $ij$.  A lot of attention has been given in recent years to this index, as well as to several modifications of it that take into account the degrees of the graph under consideration.  Indeed, Chen and Zhang defined in \cite{CheZha} the multiplicative degree-Kirchhoff index as
\begin{equation}
\label{multip}
R^*(G)=\sum_{i<j}d_id_jR_{ij},
\end{equation}
where $d_i$ is the degree (i.e., the number of neighbors) of the vertex $i$.  References  \cite{BCPT1}, \cite{Bozkurt2012}, \cite{Pal2011b} and \cite{Pal2011} deal with this index.
Also, Gutman et al. defined in \cite{Gutman} the additive degree-Kirchhoff index as
\begin{equation}
\label{addit}
R^+(G)=\sum_{i<j}(d_i+d_j)R_{ij},
\end{equation}
and worked on the identification of graphs with lowest such degree among unicyclic graphs.
The additive degree Kirchhoff index is motivated by the degree distance of a graph, and these two indices are equal in case the graph $G$ under consideration is a tree, as can be seen if the effective resistance $R_{ij}$ in equation (\ref{addit}) is replaced by the distance in the graph between $i$ and $j$.  See reference (\cite{Dobrynin}) for other details. \\
Recently, one of the authors of this paper showed in \cite{Pal2013}, using Markov chain theory,  that for any graph $G$
\begin{equation}\label{markov}
R^+(G)\ge 2(N-1)^2,
\end{equation}
and the lower bound is attained by the complete graph. Also, in \cite{Pal2013} it was shown that for any $G$
$$R^+(G)\le {1\over 3}(N^4-N^3-N^2+N),$$
and it was conjectured that the maximum of $R^+(G)$ over all graphs is attained by the $({1\over 3}, {1\over 3}, {1\over 3})$ barbell graph, which consists of two complete graphs on ${N\over 3}$ vertices united by a path of length ${N\over 3}$, and for which $R^+(G)\sim {2\over {27}}N^4$.
\vskip .2 in
The aim of the current article is to exhibit new upper and lower bounds for the additive degree-Kirchhoff index in closed forms,  not containing effective resistances but a few invariants ($N$,  $|E|$ and the degrees $d_i$), and applicable in general contexts. In what follows we only consider simple, undirected and connected graphs. In computing our bounds we follow a dual approach:
first we use a traditional toolbox of inequalities and then a relatively newer method in Mathematical Chemistry, based on majorization and Schur-convex
functions. Schur-convexity and majorization order are widely discussed in \cite{Marshall} and previous uses of the majorization partial order in
chemistry and a general overview are given in \cite{KleRan}.
%By the property that Schur-convex functions preserve the majorization order, useful
%characterizations of extremal vectors of suitable subsets of $\Re ^{n}$ can be derived.
One major advantage of this technique is to provide a unified
approach to recover many bounds in the literature as well as to obtain better ones. This technique has been applied in \cite{BT}, \cite{BCT1}, \cite{BCPT1}, \cite{BCPT2} for determining bounds of some relevant topological indicators of graphs which can be usefully expressed as Schur-convex functions.

\section{Lower bounds}

In order to produce new lower bounds for $R^+(G)$ we use  the following inequalities for the effective resistances that can be found in \cite{Pal2011b}:

\begin{equation}
\label{una}
R_{ij}\ge {{d_i+d_j-2}\over {d_id_j-1}},
\end{equation}
 in case $(i,j)\in E$ and
\begin{equation}
\label{dos}
R_{ij}\ge {1\over {d_i}}+{1\over {d_j}},
\end{equation}
 in case $(i,j)\notin E$.
 Then we can prove the following

 \begin{theorem}\label{th:first}
 For any graph $G$ with degree sequence $d_{1}\leq d_{2}\leq ...\leq d_{N}$,
 \begin{equation} \label{last}
 R^+(G)\ge N(N-4) + 2|E|\sum_{j=1}^N {1\over d_j}.
 \end{equation}
 \end{theorem}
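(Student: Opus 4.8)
The plan is to split the defining sum (\ref{addit}) according to whether a pair $\{i,j\}$ is an edge or not, apply (\ref{una}) on the edges and (\ref{dos}) on the non-edges, and then reorganize the non-edge contribution into a sum over \emph{all} pairs (which evaluates in closed form) minus a correction over edges that almost cancels the edge contribution. Concretely, writing $R^+(G)=\sum_{\{i,j\}\in E}(d_i+d_j)R_{ij}+\sum_{\{i,j\}\notin E}(d_i+d_j)R_{ij}$ and inserting the two resistance bounds gives $R^+(G)\ge A+B$, where $A=\sum_{\{i,j\}\in E}(d_i+d_j)\frac{d_i+d_j-2}{d_id_j-1}$ and $B=\sum_{\{i,j\}\notin E}\frac{(d_i+d_j)^2}{d_id_j}$, since $(d_i+d_j)\big(\tfrac1{d_i}+\tfrac1{d_j}\big)=\frac{(d_i+d_j)^2}{d_id_j}$.

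First I would dispose of $B$. Using $\frac{(d_i+d_j)^2}{d_id_j}=2+\frac{d_i}{d_j}+\frac{d_j}{d_i}$, I add and subtract the edge terms to write $B=\sum_{i<j}\frac{(d_i+d_j)^2}{d_id_j}-\sum_{\{i,j\}\in E}\frac{(d_i+d_j)^2}{d_id_j}$. The full sum is then evaluated by the elementary identity $\big(\sum_i d_i\big)\big(\sum_j d_j^{-1}\big)=N+\sum_{i<j}\big(\tfrac{d_i}{d_j}+\tfrac{d_j}{d_i}\big)$ together with $\sum_i d_i=2|E|$, which yields $\sum_{i<j}\frac{(d_i+d_j)^2}{d_id_j}=N^2-2N+2|E|\sum_j d_j^{-1}$. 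Substituting back and combining with $A$, the target inequality (\ref{last}) reduces to showing $\sum_{\{i,j\}\in E}\delta_{ij}\ge -2N$, where $\delta_{ij}:=\frac{(d_i+d_j)(d_i+d_j-2)}{d_id_j-1}-\frac{(d_i+d_j)^2}{d_id_j}$ measures, on each edge, the discrepancy between the bound (\ref{una}) actually used and the bound (\ref{dos}) applied in the full sum.

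The crux is this edge estimate. Setting $s=d_i+d_j$ and $p=d_id_j$, a short computation collapses the two fractions into $\delta_{ij}=\frac{s(s-2p)}{p(p-1)}$, which is $\le 0$ because $2d_id_j\ge d_i+d_j$ for degrees at least $1$. I would therefore bound $-\delta_{ij}=\frac{s(2p-s)}{p(p-1)}$ from above: dividing by $\tfrac{s}{p}>0$, the desired inequality $-\delta_{ij}\le 2\big(\tfrac1{d_i}+\tfrac1{d_j}\big)=\frac{2s}{p}$ is equivalent to $\frac{2p-s}{p-1}\le 2$, i.e. to $s\ge 2$, which always holds. Summing over edges, and using that each vertex $i$ lies on $d_i$ edges and contributes $1/d_i$ to each, so that $\sum_{\{i,j\}\in E}\big(\tfrac1{d_i}+\tfrac1{d_j}\big)=N$, gives $\sum_{\{i,j\}\in E}(-\delta_{ij})\le 2N$, hence $\sum_{\{i,j\}\in E}\delta_{ij}\ge -2N$, as required.

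The main obstacle is not any single sharp inequality but the bookkeeping: one must notice that the non-edge bound (\ref{dos}), once extended to all pairs, telescopes into a closed-form invariant, leaving only the edge terms to be handled by the elementary estimate above. A minor point to verify is that every denominator $d_id_j-1$ is nonzero; an edge with $d_i=d_j=1$ forces $G=K_2$, so for connected $G$ with $N\ge 3$ one always has $d_id_j\ge 2$, while the degenerate case $N=2$ can be checked directly. I also expect the resulting bound to be slightly slack in general (for $K_N$ it undershoots the true value by $2$), which is consistent with the inequality $s\ge 2$ being strict whenever $\max(d_i,d_j)\ge 2$.
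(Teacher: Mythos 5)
Your proof is correct and follows essentially the same route as the paper: split into edges and non-edges via (\ref{una}) and (\ref{dos}), evaluate the all-pairs sum through the identity $\bigl(\sum_i d_i\bigr)\bigl(\sum_j d_j^{-1}\bigr)=N+\sum_{i<j}\bigl(\tfrac{d_i}{d_j}+\tfrac{d_j}{d_i}\bigr)$, and use $\sum_{\{i,j\}\in E}\bigl(\tfrac1{d_i}+\tfrac1{d_j}\bigr)=N$. Your ``crux'' edge estimate $\delta_{ij}\ge -2\bigl(\tfrac1{d_i}+\tfrac1{d_j}\bigr)$ is exactly equivalent to the paper's (tacit) weakening of the denominator $d_id_j-1$ to $d_id_j$ in the first line of its display, so the two arguments differ only in bookkeeping.
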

 \begin{proof} Inserting (\ref{una}) and (\ref{dos}) into (\ref{addit}) we get
 \begin{equation}\label{eq:rifuno}
 \begin{split} R^+(G) &\ge \sum_{{i<j}\atop{d(i,j)=1}} {{(d_i+d_j)(d_i+d_j-2)}\over {d_id_j}}+\sum_{{i<j}\atop{d(i,j)>1}} (2+{d_i\over d_j}+{d_j\over d_i})\\
&=\sum_{{i<j}\atop{d(i,j)=1}} (2+{d_i\over d_j}+{d_j\over d_i})-2\sum_{{i<j}\atop{d(i,j)=1}} ({1\over d_i}+{1\over d_j})+\sum_{{i<j}\atop{d(i,j)>1}} (2+{d_i\over d_j}+{d_j\over d_i})\\
&=\sum_{i<j}2+\sum_{i<j}({d_i\over d_j}+{d_j\over d_i})-2\sum_{{i<j}\atop{d(i,j)=1}} ({1\over d_i}+{1\over d_j})\\
 &=N(N-1)+\sum_{i<j}({d_i\over d_j}+{d_j\over d_i})-2N.
 \end{split}
 \end{equation}
 After some algebra, it is not difficult to see that
$$\sum_{i<j}({d_i\over d_j}+{d_j\over d_i})=2|E|\sum_{j=1}^N {1\over d_j}-N,$$
and inserting into \eqref{eq:rifuno} we get the  bound (\ref{last}).
\end{proof}

The expression of the lower bound given in (\ref{last})  depends on the  summation $\sum_{j=1}^N {1\over d_j}$. Working on it, we get the following results

\begin{theorem}\label{th:segundo} Let $G$ be a graph with degree sequence $d_1 \le d_2 \le \cdots \le d_N$.
%\begin{equation}\label{eq:worst}
%R^+(G) \ge 2N(N-2)
%\end{equation}
If $d_j=1$ for $1\le j \le M<N$ then
\begin{equation}\label{fino}
R^+(G) \ge N(N-4) +2|E|\left[M+{{(N-M)^2}\over {2|E|-M}}\right].
\end{equation}
\end{theorem}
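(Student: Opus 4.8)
The plan is to start directly from Theorem~\ref{th:first}, which already gives the lower bound $R^+(G)\ge N(N-4)+2|E|\sum_{j=1}^N \frac{1}{d_j}$, and to reduce the problem to producing a sharp lower estimate for the reciprocal sum $\sum_{j=1}^N \frac{1}{d_j}$ in terms of $N$, $M$ and $|E|$ alone. Since the hypothesis fixes $d_1=\cdots=d_M=1$, the first step is to isolate these terms by writing
$$\sum_{j=1}^N \frac{1}{d_j}=\sum_{j=1}^M \frac{1}{d_j}+\sum_{j=M+1}^N \frac{1}{d_j}=M+\sum_{j=M+1}^N \frac{1}{d_j},$$
so that everything now hinges on bounding the reciprocal sum over the $N-M$ remaining (higher-degree) vertices from below.

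The second step is to record the only global constraint these degrees satisfy. The handshaking identity $\sum_{j=1}^N d_j=2|E|$, combined with $\sum_{j=1}^M d_j=M$, yields $\sum_{j=M+1}^N d_j=2|E|-M$. Thus I must minimize $\sum_{j=M+1}^N \frac{1}{d_j}$ subject to a single linear constraint on the sum of the $N-M$ positive numbers $d_{M+1},\dots,d_N$. The natural tool is the Cauchy--Schwarz inequality in its AM--HM form (equivalently Jensen's inequality, via the convexity of $t\mapsto 1/t$),
$$\left(\sum_{j=M+1}^N d_j\right)\left(\sum_{j=M+1}^N \frac{1}{d_j}\right)\ge (N-M)^2,$$
which immediately gives $\sum_{j=M+1}^N \frac{1}{d_j}\ge \frac{(N-M)^2}{2|E|-M}$.

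Combining the two displays produces $\sum_{j=1}^N \frac{1}{d_j}\ge M+\frac{(N-M)^2}{2|E|-M}$; multiplying this estimate by $2|E|$ and adding $N(N-4)$, as prescribed by Theorem~\ref{th:first}, yields exactly the inequality (\ref{fino}). There is no serious obstacle here: the whole argument is essentially a one-line application of AM--HM on top of the already established Theorem~\ref{th:first}. The only point requiring a little care is recognizing that the reciprocal sum is controlled most cleanly by Cauchy--Schwarz rather than by a more elaborate majorization estimate; the convexity of $1/t$ guarantees that this averaging bound is in fact the true minimum, attained precisely when the remaining degrees $d_{M+1},\dots,d_N$ are all equal.
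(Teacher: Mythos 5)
Your proof is correct and follows essentially the same route as the paper: isolate the $M$ degree-one vertices, note that the remaining degrees sum to $2|E|-M$ by handshaking, and apply the AM--HM (equivalently Cauchy--Schwarz) inequality to bound $\sum_{j=M+1}^N 1/d_j$ from below before plugging into Theorem~\ref{th:first}. No gaps.
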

\begin{proof}
%To prove \eqref{eq:worst} it is enough to applying the harmonic mean-arithmetic mean inequality to the last addendum in \eqref{eq:rifuno}.
If $d_j=1$ for $1\le j \le M<N$,  then
 \begin{equation}\label{extra}
R^+(G) \ge N(N-4)+2|E|M+ 2|E| \sum_{j=M+1}^N {1\over d_j}
\end{equation}
Applying  the harmonic mean-arithmetic mean inequality to the last summation in the above inequality, we obtain
$$\sum_{j=M+1}^N {1\over d_j}\ge {{(N-M)^2}\over {2|E|-M}},$$
and inserting this into (\ref{extra}) we get the desired result.
\end{proof}

 In the case of trees, by \eqref{fino} it is easy to obtain the following

\begin{corollary} If $T$ is a tree with $M\ge 2$ leaves and $N>2$ then
\begin{equation} \label{finoarbol}
R^+(T)\ge N(N-4)+2(N-1)\left[M+{{(N-M)^2}\over {2(N-1)-M}}\right].
\end{equation}
\end{corollary}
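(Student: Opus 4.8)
The plan is to apply Theorem \ref{th:segundo} directly, using the two special features of trees: the relationship between edges and vertices, and the meaning of the degree sequence. First I would recall that for any tree $T$ on $N$ vertices the size is exactly $|E| = N-1$, since a tree is a connected acyclic graph. This immediately lets me substitute $2|E| = 2(N-1)$ everywhere in the bound (\ref{fino}).

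Second, I would identify the hypothesis ``$d_j = 1$ for $1 \le j \le M$'' with the combinatorial statement that $T$ has exactly $M$ leaves (vertices of degree one). Since the degree sequence is ordered as $d_1 \le d_2 \le \cdots \le d_N$, the $M$ smallest degrees being equal to $1$ is precisely the condition that there are $M$ leaves; the requirement $M \ge 2$ guarantees at least two leaves (which every tree on $N > 2$ vertices has), and $N > 2$ ensures $M < N$ so that Theorem \ref{th:segundo} applies with a nonempty remaining block of degrees.

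With these two identifications in hand, the corollary follows by pure substitution: replacing $2|E|$ by $2(N-1)$ in the factor multiplying the bracket and in the denominator $2|E| - M$ of (\ref{fino}) yields exactly (\ref{finoarbol}). I would carry this out in a single line, writing
\begin{equation*}
R^+(T) \ge N(N-4) + 2(N-1)\left[M + \frac{(N-M)^2}{2(N-1)-M}\right].
\end{equation*}

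There is essentially no obstacle here, since the corollary is a specialization rather than a new estimate; the only point requiring a word of care is checking that the hypotheses of Theorem \ref{th:segundo} are genuinely met. The condition $M < N$ is the one to verify, and it holds because a tree on $N > 2$ vertices cannot consist entirely of leaves — at least one internal vertex of degree $\ge 2$ must exist to keep the graph connected — so the number of leaves $M$ is strictly less than $N$. Once this is noted, the substitution $|E| = N-1$ completes the argument.
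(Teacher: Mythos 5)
Your proof is correct and matches the paper's approach exactly: the corollary is obtained by substituting $|E| = N-1$ into the bound \eqref{fino} of Theorem \ref{th:segundo}, with the $M$ leaves supplying the hypothesis $d_j = 1$ for $1 \le j \le M$. Your additional check that $N>2$ forces $M<N$ (so the theorem's hypothesis is genuinely met) is a worthwhile detail the paper leaves implicit.
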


Now we will present three additional lower bounds for $R^+(G)$ starting again from (\ref{una}) and (\ref{dos}) and studying the behavior of a suitable real function depending on the degree sequence.  Later on we will discuss which bounds turn out to be better.

 \begin{theorem}\label{th:second}
 For any graph $G$  with degree sequence $d_1 \le d_2 \le \cdots \le d_N$, $N>2$,
 \begin{equation} \label{last1}
 R^+(G)\ge N(N-2) + 2|E|\sum_{j=1}^N {1\over d_j} - \dfrac {4|E|}{1+d_1}.
 \end{equation}
 \end{theorem}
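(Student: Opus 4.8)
The plan is to rerun the opening move from Theorem \ref{th:first}, but this time to retain the sharper denominator $d_id_j-1$ coming from \eqref{una} rather than relaxing it to $d_id_j$. Inserting \eqref{una} into \eqref{addit} on the adjacent pairs and \eqref{dos} on the non-adjacent ones (using $(d_i+d_j)(\tfrac1{d_i}+\tfrac1{d_j})=\tfrac{(d_i+d_j)^2}{d_id_j}$) gives
\begin{equation*}
R^+(G)\ge \sum_{{i<j}\atop{d(i,j)=1}}\frac{(d_i+d_j)(d_i+d_j-2)}{d_id_j-1}+\sum_{{i<j}\atop{d(i,j)>1}}\frac{(d_i+d_j)^2}{d_id_j}.
\end{equation*}
Writing $g(d_i,d_j)=\tfrac{(d_i+d_j)^2}{d_id_j}$ and $f(d_i,d_j)=\tfrac{(d_i+d_j)(d_i+d_j-2)}{d_id_j-1}$, I would add and subtract $g$ over the edges so that every pair contributes $g$ and only the edges carry a correction:
\begin{equation*}
R^+(G)\ge \sum_{i<j}g(d_i,d_j)-\sum_{{i<j}\atop{d(i,j)=1}}\bigl[g(d_i,d_j)-f(d_i,d_j)\bigr].
\end{equation*}

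Next I would evaluate the complete sum exactly as in Theorem \ref{th:first}: $\sum_{i<j}g(d_i,d_j)=\sum_{i<j}2+\sum_{i<j}\bigl(\tfrac{d_i}{d_j}+\tfrac{d_j}{d_i}\bigr)=N(N-1)+2|E|\sum_{j=1}^N\tfrac1{d_j}-N$, which is precisely the leading term $N(N-2)+2|E|\sum_{j=1}^N\tfrac1{d_j}$ of \eqref{last1}. It then remains to bound the edge-correction by $\tfrac{4|E|}{1+d_1}$, and since there are exactly $|E|$ edges it suffices to prove the pointwise estimate
\begin{equation*}
g(d_i,d_j)-f(d_i,d_j)=\frac{(d_i+d_j)(2d_id_j-d_i-d_j)}{d_id_j(d_id_j-1)}\le \frac{4}{1+d_1}
\end{equation*}
for each edge; note that on any edge of a connected graph with $N>2$ one has $d_id_j\ge 2$, so the left-hand side is well defined and positive.

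The heart of the matter is this last inequality, which I read as a statement about a real function of the two degrees. My preferred route is the reciprocal substitution $u=1/d_i$, $v=1/d_j$, under which the quantity collapses to $g-f=\tfrac{(u+v)(2-u-v)}{1-uv}$ with $0<u,v\le 1/d_1$. Setting $t=u+v$, for fixed $t$ the numerator is constant while the denominator $1-uv$ is smallest when $uv$ is largest, i.e. at $u=v=t/2$ by the arithmetic–geometric mean inequality; this gives $g-f\le \tfrac{4t}{2+t}$, which is increasing in $t$, and $u,v\le 1/d_1$ forces $t\le 2/d_1$, so $g-f\le \tfrac{4(2/d_1)}{2+2/d_1}=\tfrac{4}{1+d_1}$, with equality when $d_i=d_j=d_1$. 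Summing over the $|E|$ edges and combining with the complete sum then yields \eqref{last1}. I expect the two-variable maximization to be the only real obstacle: a brute-force derivative analysis of $g-f$ in $d_i,d_j$ is messy, so the crux is spotting that the reciprocal substitution together with the symmetric-function reduction (fix $t=u+v$, maximize $uv$) reduces everything to an elementary one-variable monotonicity check.
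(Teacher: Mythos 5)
Your proposal is correct, and its skeleton coincides with the paper's: both start from \eqref{una} and \eqref{dos}, add and subtract the full-pair sum $\sum_{i<j}\bigl(2+\tfrac{d_i}{d_j}+\tfrac{d_j}{d_i}\bigr)$ to isolate $N(N-2)+2|E|\sum_j \tfrac1{d_j}$ plus a per-edge correction, and then reduce the theorem to the pointwise estimate that each edge contributes at least $-\tfrac{4}{1+d_1}$. Where you genuinely diverge is in proving that pointwise estimate. The paper works directly in the degree variables: it fixes $d_j$ and shows $x\mapsto \tfrac{(x+d_j)(x+d_j-2d_jx)}{d_jx(d_jx-1)}$ is increasing on the relevant range (with a separate check that $f(2)\ge f(1)$), then repeats the monotonicity argument in the second variable, and finally splits into the cases $d_1\ge 2$ and $d_1=1$ (where the bound $-\tfrac32>-2$ is verified by hand). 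Your reciprocal substitution $u=1/d_i$, $v=1/d_j$ turns the correction into $\tfrac{(u+v)(2-u-v)}{1-uv}$, and the symmetric reduction (fix $t=u+v$, maximize $uv$ at $u=v=t/2$, then use monotonicity of $\tfrac{4t}{2+t}$) handles both cases uniformly, avoids the two-stage derivative analysis entirely, and exhibits the equality configuration $d_i=d_j=d_1$. The one hypothesis you rightly flag — that $d_id_j\ge 2$ on every edge, so the numerator $t(2-t)$ is nonnegative and the denominator $1-uv$ stays positive — is exactly what connectedness with $N>2$ supplies, and it plays the same role as the paper's ad hoc treatment of $d_1=1$. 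In short: same decomposition, but a cleaner and more unified proof of the crucial per-edge inequality.
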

 \begin{proof} Inserting (\ref{una}) and (\ref{dos}) into (\ref{addit}) we get
 \begin{equation} \label{bound}
 \begin{split}
 R^+(G) &\ge \sum_{{i<j}\atop{d(i,j)=1}} {{(d_i+d_j)(d_i+d_j-2)}\over {d_id_j-1}}+\sum_{{i<j}\atop{d(i,j)>1}} (2+{d_i\over d_j}+{d_j\over d_i})\\
&=\sum_{{i<j}\atop{d(i,j)=1}} \left[ \dfrac {(d_i+d_j)(d_i+d_j-2)}{d_id_j-1} - (2+{d_i\over d_j}+{d_j\over d_i}) \right ] + \sum_{i<j} (2+{d_i\over d_j}+{d_j\over d_i})\\
& = \sum_{{i<j}\atop{d(i,j)=1}} \left[ \dfrac {(d_i+d_j)(d_i+d_j-2d_id_j)}{d_id_j(d_id_j-1)} \right ] +N(N-2) + 2|E|\sum_{i=1}^n \frac 1 {d_i}.
 \end{split}
 \end{equation}
To bound the first term, let us consider the real function $f(x)=\frac {(x+d_j)(x+d_j-2d_jx)}{d_jx(d_jx-1)} $ in the interval $I=[d_1,d_N]$, for $d_j \ge 2$. By Calculus, this function is increasing for $x \ge 2$ and moreover $f(2) \ge f(1)$. Thus for any integer $x \in I$ we get $f(x) \ge \frac {(d_1+d_j)(d_1+d_j-2d_jd_1)}{d_jd_1(d_jd_1-1)} $. A similar argument applied to the function $g(x)= \frac {(d_1+x)(d_1+x-2d_1x)}{d_1x(d_1x-1)}$ in the interval $I'=[2,d_N]$ shows that $g$ is increasing in $I'$ and thus $g(x) \ge g(d_1) = - \frac {4}{1+d_1}$ if $d_1 \ge 2$. When $d_1=1$, for $x \ge 2$, we get $g(x)= - \frac {x+1} x$  and $g(x) \ge g(2)= - \frac 32>-2.$ Therefore
\begin{equation}\label{extra1}
\sum_{{i<j}\atop{d(i,j)=1}}  \dfrac {(d_i+d_j)(d_i+d_j-2d_id_j)}{d_id_j(d_id_j-1)} \ge - \dfrac {4|E|} {1+d_1}.
\end{equation}
Inserting (\ref{extra1}) in (\ref{bound}) we get (\ref{last1}).
\end{proof}

Finally, applying the harmonic mean-arithmetic mean inequality, from (\ref{last1}) we deduce the following corollaries
\begin{corollary}
For any graph $G$ with degree sequence $d_1 \le d_2 \le \cdots \le d_N$, $N>2$,
\begin{equation}\label{bound1}
R^+(G) \ge  2N(N-1)- \dfrac {4|E|} {1+d_1}.
\end{equation}
\end{corollary}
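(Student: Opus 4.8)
The plan is to derive this corollary directly from the lower bound already proved in Theorem \ref{th:second}, namely inequality (\ref{last1}),
\begin{equation*}
R^+(G)\ge N(N-2) + 2|E|\sum_{j=1}^N \frac{1}{d_j} - \frac{4|E|}{1+d_1},
\end{equation*}
by replacing the remaining degree-dependent sum $\sum_{j=1}^N 1/d_j$ with a cleaner lower estimate. The only structural fact I need is the handshaking identity $\sum_{j=1}^N d_j = 2|E|$, which holds for every graph and lets me express the arithmetic mean of the degrees purely in terms of $N$ and $|E|$.

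First I would invoke the harmonic mean--arithmetic mean inequality for the positive quantities $d_1,\dots,d_N$ (positivity is guaranteed since $G$ is connected, so every $d_j\ge 1$). In the form relating the harmonic and arithmetic means this reads
\begin{equation*}
\frac{N}{\sum_{j=1}^N \frac{1}{d_j}} \le \frac{\sum_{j=1}^N d_j}{N} = \frac{2|E|}{N},
\end{equation*}
which rearranges to the lower bound $\sum_{j=1}^N 1/d_j \ge N^2/(2|E|)$. Multiplying through by $2|E|$ gives $2|E|\sum_{j=1}^N 1/d_j \ge N^2$.

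Substituting this into (\ref{last1}) and collecting the polynomial terms finishes the argument: $N(N-2)+N^2 = 2N^2-2N = 2N(N-1)$, so we land exactly on
\begin{equation*}
R^+(G) \ge 2N(N-1) - \frac{4|E|}{1+d_1},
\end{equation*}
which is (\ref{bound1}). There is essentially no hard step here, since the corollary is a one-line consequence of the theorem; the only point requiring a moment's care is confirming that the harmonic mean--arithmetic mean inequality applies (all degrees strictly positive) and that the substitution of $\sum d_j = 2|E|$ turns the bound into a closed form depending only on the invariants $N$, $|E|$ and $d_1$. Equality in the intermediate step holds precisely for regular graphs, which indicates where the estimate is sharp.
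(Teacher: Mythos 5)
Your proof is correct and matches the paper's own derivation: the paper likewise obtains this corollary by applying the harmonic mean--arithmetic mean inequality to the term $2|E|\sum_{j=1}^N 1/d_j$ in (\ref{last1}), using $\sum_j d_j = 2|E|$ to get $2|E|\sum_j 1/d_j \ge N^2$ and then combining $N(N-2)+N^2 = 2N(N-1)$. Nothing is missing.
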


\begin{corollary}
If $d_j=1$ for $1\le j \le M<N$ then
\begin{equation}\label{fino1}
R^+(G) \ge N(N-2) +2|E|\left[M+{{(N-M)^2}\over {2|E|-M}}\right]-2|E|.
\end{equation}
\end{corollary}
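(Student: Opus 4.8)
The plan is to derive this bound directly from Theorem~\ref{th:second}, proceeding in exact analogy to the way Theorem~\ref{th:segundo} was obtained from Theorem~\ref{th:first}. The starting point is inequality~(\ref{last1}), namely
$$R^+(G)\ge N(N-2) + 2|E|\sum_{j=1}^N {1\over d_j} - \dfrac {4|E|}{1+d_1}.$$
The first step is to observe that the hypothesis $d_j=1$ for $1\le j \le M$ (with $M\ge 1$) forces $d_1=1$, so the subtracted term simplifies neatly: $\dfrac{4|E|}{1+d_1}=\dfrac{4|E|}{2}=2|E|$. This already accounts for the final $-2|E|$ appearing in (\ref{fino1}) and explains why the additive constant is $N(N-2)$ rather than $N(N-4)$.

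Next I would split the harmonic sum along the same dividing index $M$ used in Theorem~\ref{th:segundo}. Since the first $M$ degrees equal $1$, we have $\sum_{j=1}^M \frac{1}{d_j}=M$, and therefore
$$\sum_{j=1}^N \frac{1}{d_j} = M + \sum_{j=M+1}^N \frac{1}{d_j}.$$
The key quantitative step is then to bound the tail $\sum_{j=M+1}^N \frac{1}{d_j}$ from below using the harmonic mean--arithmetic mean inequality applied to the $N-M$ values $d_{M+1},\dots,d_N$, exactly as in the proof of Theorem~\ref{th:segundo}. This yields
$$\sum_{j=M+1}^N \frac{1}{d_j} \ge \dfrac{(N-M)^2}{\sum_{j=M+1}^N d_j}.$$

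The only arithmetic point requiring care is the denominator: since $\sum_{j=1}^N d_j = 2|E|$ and $\sum_{j=1}^M d_j = M$, we get $\sum_{j=M+1}^N d_j = 2|E|-M$, so the tail bound becomes $\dfrac{(N-M)^2}{2|E|-M}$. Substituting the simplified term $2|E|$ and this tail estimate back into (\ref{last1}) produces precisely (\ref{fino1}). I do not anticipate a genuine obstacle here, as every ingredient is already established earlier in the excerpt; the proof is a clean combination of Theorem~\ref{th:second} with the harmonic--arithmetic mean inequality, and the main thing to watch is simply keeping the bookkeeping of the degree sum consistent so that the denominator $2|E|-M$ matches the target statement.
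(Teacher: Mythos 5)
Your proposal is correct and follows exactly the route the paper intends: the corollary is stated as a consequence of Theorem~\ref{th:second} via the harmonic mean--arithmetic mean inequality, and your steps (setting $d_1=1$ to get the $-2|E|$ term, splitting the harmonic sum at index $M$, and bounding the tail by $(N-M)^2/(2|E|-M)$ using $\sum_{j=M+1}^N d_j = 2|E|-M$) are precisely the computation the paper leaves to the reader.
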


\begin{corollary} If $T$ is a tree with $M\ge 2$ leaves then
\begin{equation} \label{finoarbol1}
R^+(T)\ge N(N-2)+2(N-1)\left[M+{{(N-M)^2}\over {2(N-1)-M}}\right]-2(N-1).
\end{equation}
\end{corollary}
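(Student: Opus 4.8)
The plan is to specialize the preceding corollary \eqref{fino1} to the case of trees, where the only structural facts needed are the characterization of leaves as degree-one vertices and the tree identity $|E|=N-1$.

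First I would observe that in a tree every leaf is, by definition, a vertex of degree one, and conversely every degree-one vertex of a tree is a leaf; hence the number of entries equal to $1$ in the ordered degree sequence $d_1\le d_2\le\cdots\le d_N$ is exactly the number $M$ of leaves. Since $1$ is the smallest possible degree, these entries occupy the first $M$ positions, so the hypothesis $d_j=1$ for $1\le j\le M$ of corollary \eqref{fino1} is automatically satisfied.

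Next I would verify the remaining hypothesis $M<N$. For $N>2$ a tree must contain at least one vertex of degree at least two, since a graph in which every vertex has degree one is a disjoint union of single edges and hence cannot be connected on more than two vertices; therefore $M\le N-1<N$. Together with $M\ge 2$ (every tree on at least two vertices has at least two leaves) this shows that all the hypotheses of \eqref{fino1} are met.

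Finally, I would invoke the standard fact that a tree on $N$ vertices has exactly $|E|=N-1$ edges and substitute $|E|=N-1$ directly into \eqref{fino1}: this replaces $2|E|$ by $2(N-1)$ and the denominator $2|E|-M$ by $2(N-1)-M$, producing \eqref{finoarbol1}. The derivation presents no genuine obstacle, being a direct specialization; the only point that merits a moment's attention is the verification of the hypothesis $M<N$, which rests on the elementary observation that a connected graph on more than two vertices cannot be $1$-regular.
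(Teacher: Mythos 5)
Your derivation is correct and matches the paper's intent exactly: the corollary is obtained by specializing \eqref{fino1} to trees via $|E|=N-1$ and the identification of leaves with degree-one vertices (the paper states it without proof, as an immediate consequence). Your extra care in checking the hypothesis $M<N$ is sound and harmless.
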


\begin{remark}
 It is a simple exercise in Calculus to show that the real functions
$$\Phi(x)=x+{{(N-x)^2}\over {2|E|-x}},\,\,\,  x \ge 0$$
and
$$\Psi(x)=x\left[M+{{(N-M)^2}\over {2x-M}}\right],\,\,\,  x \ge N-1$$
are increasing.

The fact that $\Phi$ is increasing tells us that the bounds (\ref{fino}), (\ref{finoarbol}), (\ref{fino1}) and (\ref{finoarbol1})  improve as the number of vertices with degree 1 increases. Indeed,  the bound \eqref{fino} is  worse than the universal bound \eqref{markov} only when $M=0$,  a case we intentionally disregarded in the statement of Theorem \ref{th:segundo}, while for $M\ge 1$ our new bound betters \eqref{markov}.
In fact, since $\Phi(x)$ and $\Psi(x)$ are increasing, if $M\ge 1$, then either $M=1$ and $|E|$ must be at least $N$, because $G$ cannot be a tree with one leaf, or $M\ge 2$.
In the first case, $M=1$ and $|E|=N$, a simple computation yields that the bound (\ref{fino}) is $N^2-4N+2N\left[\dfrac{N^2}{2N-1}\right]$ which is better than \eqref{markov} whenever $N\ge 4$.
In the second case, with $M=2$ and $|E|=N-1$, an easy calculation yields that the bound (\ref{fino}), for $N>2$, is $2N^2-3N-2$, which is better than \eqref{markov} for $N\geq4$.
So by the monotonicity of $\Phi$ and $\Psi$ all cases are covered except possibly $M=1$ and $N=2$ or $3$. But it is impossible for a graph to have just one leaf and 2 or 3 vertices.
%Indeed, the bounds are worst when $M=0$ in which case they become  $2N(N-2)$, which is slightly
%worse than the universal lower bound (\ref{markov})  found in \cite{Pal2013} with Markov chain arguments.
% For $M\ge 1$, however, these lower bounds improve the universal lower bound.

The fact that $\Psi$ is increasing tells us that (\ref{fino})  improves as the number of edges $|E|$ increases, so in a sense the lower bound (\ref{finoarbol}) is weakest.  This is noticeable when $M$ is small, as in the linear graph, where the bound is quadratic and the value of $R^+$ is cubic, but it is not so when $M$ is large, as in the $N$-star graph, where the bound (\ref{finoarbol})  becomes $3N^2-8N+4$ and the actual value of $R^+$ is $3N^2-7N+4$.  In the opposite direction, with $M$ small and $|E|$ large, \eqref{fino} performs well: if we take a complete $K_{N-1}$ with a single vertex attached with a single edge to anyone of the vertices of the $K_{N-1}$, then we get $R^+=3N^2-8N+8-{2\over {N-1}}$, whereas the lower bound is $3N^2-9N+6$.
\end{remark}

Summing up, by easy computations we have that:

\begin{enumerate}
\item  bound (\ref{finoarbol1}) is always better than bound (\ref{finoarbol});
\item  bound (\ref{fino}) is better than (\ref{fino1}) for $ |E|>N $ and they coincide for $|E|=N $, i.e. for the unicyclic graph;
\item  bound (\ref{bound1}) is better than (\ref{markov}) if and only if
\begin{equation}\label{condition}
(1+d_1)(N-1)>2|E|
\end{equation}

and they coincide, for example, in case of  trees and complete graphs. Note that (\ref{condition}) is satisfied, for instance, in case of $d$-regular graphs, with $d<N-1$.
\end{enumerate}

\section{Lower bounds via the majorization technique}
In this Section we show how  majorization can be applied to bound the additive degree-Kirchhoff index. This approach can be pursued if we can identify a set of variables with constant sum and a Schur-convex  function $f$ to be optimized on the set $S$ of these variables. In this case the global  minimum (maximum) of $f$ is attained at the minimum (maximum) element of the set $S$ with respect to the majorization order (see \cite{BT} and \cite{BCT1} for more details).

\begin{theorem}\label{th:major1}
For any graph $G$ with degree sequence $d_1\le d_2 \le \cdots \le d_n$, let
\begin{equation}\label{eq:k}
\underset{i<j}{\sum }\dfrac{d_j}{d_i}=H.
\end{equation}
Then
\begin{equation}\label{eq:new_bound}
R^+(G) \ge N(N-3) + H + \left [ \frac {N(N-1)} 2\right ]^2 \frac 1 {H}.
\end{equation}
\end{theorem}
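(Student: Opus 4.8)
The plan is to recycle the intermediate estimate that already appears inside the proof of Theorem~\ref{th:first}, stopping one step before the algebraic simplification of the ratio sum. Keeping the last line of \eqref{eq:rifuno} in its unsimplified form gives
\begin{equation*}
R^+(G)\ge N(N-1)+\sum_{i<j}\left(\frac{d_i}{d_j}+\frac{d_j}{d_i}\right)-2N
= N(N-3)+\sum_{i<j}\left(\frac{d_i}{d_j}+\frac{d_j}{d_i}\right).
\end{equation*}
Since the additive constant $N(N-3)$ already matches the target of \eqref{eq:new_bound}, the entire problem collapses to estimating the symmetric ratio sum from below in terms of $H$. Because the degrees are ordered, $d_j/d_i\ge 1\ge d_i/d_j$ for $i<j$, so I split the sum into $H=\sum_{i<j} d_j/d_i$ (the quantity fixed by \eqref{eq:k}) and $H'=\sum_{i<j} d_i/d_j$, obtaining $R^+(G)\ge N(N-3)+H+H'$. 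It thus suffices to prove the single inequality $H'\ge \bigl[N(N-1)/2\bigr]^2/H$.

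For this last inequality I would use the majorization machinery announced at the head of the section. Treat the $m:=\binom{N}{2}=N(N-1)/2$ ratios $x_{ij}:=d_j/d_i$ as the free variables; by hypothesis they have fixed sum $H$. The quantity to be minimized, $H'=\sum_{i<j} 1/x_{ij}=\sum_{i<j} g(x_{ij})$ with $g(t)=1/t$, is a symmetric sum of a convex function, hence a Schur-convex function of the vector $(x_{ij})$. On the set of nonnegative vectors with prescribed sum $H$, the minimal element in the majorization order is the equal-coordinate vector $x_{ij}=H/m$, and a Schur-convex function attains its minimum there. Evaluating gives $H'\ge m\cdot(m/H)=m^2/H=[N(N-1)/2]^2/H$, and inserting this into the displayed bound yields \eqref{eq:new_bound}. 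Equivalently, this is nothing but the Cauchy--Schwarz estimate $H\,H'\ge\bigl(\sum_{i<j}1\bigr)^2=m^2$, which is the genuinely decisive fact here.

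The step demanding the most care is the majorization argument, and specifically its feasibility and direction. One must check that the equal-coordinate vector is admissible: the ordering of the degrees forces $H=\sum_{i<j} d_j/d_i\ge m$, so $H/m\ge 1$ is consistent with the constraints $x_{ij}\ge 1$, and since the uniform vector is majorized by every nonnegative vector of the same sum, the lower bound on $H'$ holds even after any extra constraints are imposed. The remaining work is purely clerical, namely verifying $N(N-1)-2N=N(N-3)$ and $m^2=[N(N-1)/2]^2$, both immediate, so I expect no real difficulty beyond correctly invoking Schur-convexity in the right direction.
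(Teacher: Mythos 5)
Your proposal is correct and follows essentially the same route as the paper: both start from the intermediate inequality $R^+(G)\ge N(N-3)+\sum_{i<j}(d_i/d_j+d_j/d_i)$ extracted from the proof of Theorem~\ref{th:first}, then treat the ratios $x_{ij}=d_j/d_i$ as nonnegative variables with fixed sum $H$ and bound the Schur-convex sum from below by its value at the minimal (uniform) element in the majorization order. The only cosmetic difference is that the paper applies Schur-convexity to $\sum_{i<j}(x_{ij}+1/x_{ij})$ as a whole, whereas you isolate $\sum_{i<j}1/x_{ij}$ and note the equivalence with the Cauchy--Schwarz (harmonic--arithmetic mean) estimate $H\,H'\ge m^2$; these are the same argument.
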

\begin{proof}
Let us consider the $\dfrac{N(N-1)}{2}$ variables $x_{ij}=\dfrac{d_j}{d_i}$, with $i<j$.  The function
$$
f(x_{12}, x_{13}, \cdots x_{(N-1)N})= \sum_{i<j} \left ( \frac {d_i}{d_j} + \frac {d_j}{d_i} \right ) = \sum_{i=1}^{N-1} \sum_{j=i+1}^N  \left ( x_{ij} + \frac 1 {x_{ij}}  \right )
$$
is Schur-convex in the variables $x_{ij}$.

The minimal element of the  set
$$
\Sigma_{H}= \{ \mathbf{w} \in \mathbb{R}^{N(N-1)/2} : w_1 \ge w_2 \ge \cdots w_{N(N-1)/2} \ge 0 \,\, ,  \sum_i w_i =H \}
$$
with respect to the majorization order is $\frac {2H} {N(N-1)} \mathbf{s}^{N(N-1)/2}$, where $\mathbf{s}$ is the unit vector (see \cite{Marshall} ).  Thus we get the lower bound:
\begin{equation}\label{eq:first bound}
\sum_{i=1}^{N-1} \sum_{j=i+1}^N  \left ( x_{ij} + \frac 1 {x_{ij}}  \right ) \ge H + \left [ \frac {N(N-1)} 2\right ]^2 \frac 1 {H}.
\end{equation}
By the inequality (7) in Theorem \ref{th:first} we know that
$$
R^+(G) \ge N(N-3) + \sum_{i<j} \left ( \frac {d_i}{d_j} + \frac {d_j}{d_i} \right )
$$
and by \eqref{eq:first bound} we obtain the expected bound.
\end{proof}

\begin{remark}
The majorization technique would work just as fine if we considered the $\dfrac{N(N-1)}{2}$ variables $\dfrac 1 {x_{ij}} = \dfrac{d_i}{d_j}$ and the invariant quantity
\begin{equation}\label{eq:k**}
\underset{i<j}{\sum }\dfrac{d_i}{d_j}=H^{*}.
\end{equation}
Following the same steps as above we have another lower bound
\begin{equation}\label{eq:second  bound}
\sum_{i=1}^{N-1} \sum_{j=i+1}^N \left ( x_{ij} + \frac 1 {x_{ij}}  \right ) \ge H^{*} + \left [ \frac {N(N-1)} 2\right ]^2 \frac 1 {H^{*}}.
\end{equation}
Except for $d$-regular graphs for which the bounds \eqref{eq:first bound} and \eqref{eq:second  bound} coincide,  because $H=H^*$, in all other cases the first bound is always better than the second one.
In fact, by means of the harmonic mean - arithmetic mean inequality we get:
$$
H^{*} \cdot H \ge \left [ \frac {N(N-1)} 2 \right ]^2
$$
If $G$ is not a $d$-regular graph, the inequality $H > \frac {N(N-1)}2 > H^{*}$ holds.   Multiplying both sides of the last inequality by $(H-H^{\ast})$ yields:
\[
(H-H^{*}) \cdot H \cdot H^{*} \geq (H-H^{\ast }) \left[ \frac{N(N-1)}{2}\right]
^{2}.
\]
It follows that
$$
(H-H^{*} )\geq \left( \dfrac{H-H^*}{H \cdot H^{*}} \right) \left[ \dfrac{N(N-1)}{2}\right] ^{2}=\left( \dfrac{1}{H^{*}}-\dfrac{1}{H}\right) \left[ \dfrac{N(N-1)}{2}\right] ^{2}.
$$
Rearranging this inequality we conclude that  the lower bound in \eqref{eq:first bound} is always better than  the one in \eqref{eq:second  bound}.
\end{remark}

\noindent The usefulness of  \eqref{eq:new_bound} is limited by the computation of the graph invariant $H$.
We will list later  some examples of graphs for which this computation can be easily handled and compare  \eqref{eq:new_bound}  with the other bounds.
\vskip .2 in
Majorization is also the main argument in yet another possible approach for obtaining lower bounds.  In reference \cite{Pal2013} it was shown that the following relationship between the additive and multiplicative degree-Kirchhoff indices holds :
  \begin{equation}
 \label{rela}
 R^+(G)={N\over {2|E|}}R^*(G)+\sum_{j=1}^N\sum_{i\neq j} \pi_i E_iT_j,
 \end{equation}
 where $E_iT_j$ is the expected value of the number of steps $T_j$ that the random walk on $G$, started from vertex $i$, takes to reach vertex $j$.  We recall that this random walk moves from a vertex $v$ to any neighboring vertex $w$ with uniform probabilities $p(v,w)={1\over {d_v}}$ and that the $N\times N$ matrix $P=\left(p(v,w)\right)$ of transition probabilities has a unique probabilistic left eigenvector $\pi=(\pi_i)$ (the stationary distribution),  which is present in the summation in (\ref{rela}), and a spectrum $1=\lambda_1>\lambda_2\ge \lambda_3\ge \cdots \ge \lambda_N\ge -1$ in terms of which  $R^*(G)$ can be expressed (see \cite{Pal2011}), namely
 $$\displaystyle R^*(G)=2|E|\sum_{i=2}^N {1\over {1-\lambda_i}}.$$
With the preceding remarks and notation we can prove now the following

 \begin{theorem}
 For any graph $G$
 \begin{equation}\label{eq:bound_major1}
 R^+(G)\ge N\left[ {1\over {1+{\sigma \over \sqrt{N-1}}}}+{{(N-2)^2}\over {N-1-{\sigma \over \sqrt{N-1}}}}\right]+(N-1)^2,
 \end{equation}
 where
 $$\sigma^2={2\over N}\sum_{(i,j)\in E}{1\over {d_id_j}}=\frac {{\rm tr}(P^2)}N$$
 %($R_{-1}(G)$ is the general Randi\'c index of $G$ for $\alpha=-1$) can be expressed via the trace of $P^2$, i.e. $\sigma^2=$.
 \end{theorem}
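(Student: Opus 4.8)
The plan is to work from the identity \eqref{rela}, which writes $R^+(G)$ as the sum of $\frac{N}{2|E|}R^*(G)=N\sum_{k=2}^N\frac{1}{1-\lambda_k}$ and the hitting-time term $T:=\sum_{j}\sum_{i\neq j}\pi_iE_iT_j$. For $T$ I would invoke the estimate $T\ge (N-1)^2$, which is precisely the ingredient behind the universal bound \eqref{markov} in \cite{Pal2013}: there one shows $\frac{N}{2|E|}R^*(G)\ge (N-1)^2$ by the harmonic--arithmetic mean inequality applied to $\sum_{k\ge2}(1-\lambda_k)=N$, and $T\ge (N-1)^2$, the two summing to $2(N-1)^2$ (both are exactly $(N-1)^2$ at $K_N$). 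Granting $T\ge(N-1)^2$, the whole statement reduces to the spectral inequality $\sum_{k=2}^N\frac{1}{1-\lambda_k}\ge \frac{1}{1+\frac{\sigma}{\sqrt{N-1}}}+\frac{(N-2)^2}{N-1-\frac{\sigma}{\sqrt{N-1}}}$, after which multiplying by $N$ and adding $(N-1)^2$ gives \eqref{eq:bound_major1}.

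To bound the spectral sum I would single out the smallest eigenvalue $\lambda_N$ and treat the remaining $N-2$ reciprocals by the harmonic--arithmetic mean inequality, in the same ``minimal element'' spirit as Theorem \ref{th:major1}: the reciprocal sum over the middle eigenvalues is Schur-convex and is smallest when the $1-\lambda_k$ are all equal. Since $P$ has zero trace, $\sum_{k=2}^N\lambda_k=-1$, hence $\sum_{k=2}^{N-1}(1-\lambda_k)=(N-2)-(-1-\lambda_N)=N-1+\lambda_N$, and the inequality yields $\sum_{k=2}^{N-1}\frac{1}{1-\lambda_k}\ge \frac{(N-2)^2}{N-1+\lambda_N}$. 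Therefore $\sum_{k=2}^N\frac{1}{1-\lambda_k}\ge h(\lambda_N)$, where $h(x)=\frac{1}{1-x}+\frac{(N-2)^2}{N-1+x}$.

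The core estimate is then $\lambda_N\le -\frac{\sigma}{\sqrt{N-1}}$, which I would prove by the shift $a_i:=\lambda_i-\lambda_N\ge 0$ (nonnegative since $\lambda_N$ is the minimum). From $\sum_{i=1}^N\lambda_i=0$ one gets $\sum_i a_i=-N\lambda_N$, and expanding gives $\sum_i a_i^2=\sum_i\lambda_i^2+N\lambda_N^2=N\sigma^2+N\lambda_N^2$, using $\sum_i\lambda_i^2=\mathrm{tr}(P^2)=N\sigma^2$. Because the $a_i$ are nonnegative, $\sum_i a_i^2\le\bigl(\sum_i a_i\bigr)^2=N^2\lambda_N^2$, so $N\sigma^2+N\lambda_N^2\le N^2\lambda_N^2$, i.e. $\sigma^2\le (N-1)\lambda_N^2$; since $\lambda_N<0$ this is exactly $\lambda_N\le -\frac{\sigma}{\sqrt{N-1}}$. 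A short Calculus check shows $h$ is decreasing on $[-1,-\tfrac{1}{N-1}]$ (its critical point is $-\tfrac1{N-1}$), and both $\lambda_N$ and $-\frac{\sigma}{\sqrt{N-1}}$ lie in this interval: indeed $\lambda_N\le -\tfrac1{N-1}$ because $-\tfrac1{N-1}$ is the mean of $\lambda_2,\dots,\lambda_N$, while $\tfrac1{N-1}\le \tfrac{\sigma}{\sqrt{N-1}}\le 1$ follows from $\tfrac1{N-1}\le\sigma^2\le 1$. Monotonicity with $\lambda_N\le -\frac{\sigma}{\sqrt{N-1}}$ then yields $h(\lambda_N)\ge h\!\left(-\frac{\sigma}{\sqrt{N-1}}\right)$, which is the desired right-hand side.

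The delicate points are twofold. First, the monotonicity of $h$ must be used in the correct direction: $h$ decreases on $[-1,-\tfrac1{N-1}]$, and since both arguments sit in that interval with $\lambda_N$ the smaller, the inequality flips the right way and no tightness is lost at $K_N$ (where $\lambda_N=-\tfrac1{N-1}=-\frac{\sigma}{\sqrt{N-1}}$). Second, the scheme leans on the external bound $T\ge (N-1)^2$ for the hitting-time term; if it is not taken as known from \cite{Pal2013}, it must itself be established, for instance through a spectral expansion of $E_\pi T_j$ in the eigenbasis of $P$, and that is where the genuinely hard work would lie.
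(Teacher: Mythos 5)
Your proposal is correct and follows essentially the same route as the paper: both start from the identity \eqref{rela}, bound the hitting-time term below by $(N-1)^2$ (via $2|E|\sum_j 1/d_j \ge N^2$), and then bound $\sum_{i=2}^N \frac{1}{1-\lambda_i}$ below by $\frac{1}{1+\sigma/\sqrt{N-1}}+\frac{(N-2)^2}{N-1-\sigma/\sqrt{N-1}}$. The only difference is that the paper delegates this last spectral step to Proposition 11 of \cite{BCPT1} (a majorization argument), whereas you supply a correct self-contained proof of it — singling out $\lambda_N$, applying the harmonic--arithmetic mean inequality to the middle eigenvalues, proving $\lambda_N\le -\sigma/\sqrt{N-1}$ from ${\rm tr}(P)=0$ and ${\rm tr}(P^2)=N\sigma^2$, and checking the monotonicity of $h$ on $[-1,-\tfrac{1}{N-1}]$ — which amounts to reproving the cited result.
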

\begin{proof}
First, bound the summation with the hitting times as in \cite{Pal2013}:
 $$R^+(G)\ge N \sum_{i=2}^N {1\over {1-\lambda_i}} + 2|E|\sum_{j=1}^N{1\over {d_j}}-2N+1.$$
 Now apply the harmonic mean-arithmetic mean inequality only to the second addendum in order to get
 $$R^+(G)\ge N \sum_{i=2}^N {1\over {1-\lambda_i}} + (N-1)^2.$$
 Finally apply majorization to $\sum_{i=2}^N {1\over {1-\lambda_i}}$, as in \cite{BCPT1}, Proposition 11,  in order to get the expected bound \eqref{eq:bound_major1}.
\end{proof}

We remark that
 we recover the universal bound \eqref{markov} for the complete graph, for which $\sigma={1\over \sqrt{N-1}}$, and for all other graphs the bound is better than the universal one \eqref{markov}, as can be seen in the discussion in Section 3.1.1 of \cite{BCPT1}. \\
%Indeed, the function
%   $$ g(x)= \frac 1 x + \frac{(N-2)^2}{N-x} $$
%   is increasing for $x \ge \frac N {N-1}$, $N>3$ and $g(\frac N {N-1})=$
%We refer the reader to \cite{BCPT1} for further details.
In order to complete our analysis, we study some particular classes of graphs for which the computation of $H$ is simple and we compare  \eqref{eq:new_bound}  with the other bounds.

\subsection{$d$-regular graph}
For $d$-regular graphs we have  $H=\dfrac{N(N-1)}{2}$. The lower bound \eqref{eq:new_bound} becomes $N(N-1)$ which is worse than bound \eqref{markov} and consequently worse than (\ref{bound1}) and \eqref{eq:bound_major1}.
%
%For example, if we consider a cubic graph as in the following figure which represents a possible isomer of $C_{6}H_{6}$,
%\begin{figure}[h!]
%\centering \includegraphics[height=3cm]{d-reg}
%\end{figure}
%\\the bound \eqref{eq:new_bound} gives back 48 while with bound \eqref{markov} we have $50$ and with (\ref{bound1}) we get $51$.
\subsection{$(a,b)-$ semiregular graph}
%Let us consider a semiregular graph that has $N_1$ vertices with degree $a$ and $N_2$ vertices with degree $b$, $a<b$, $N=N_1+N_2$. Then $H=\dfrac{N(N-1)}{2} + \left( \frac b a -1 \right ) N_1N_2$. As a particular case, the star has $N_1=N-1$ and $N_2=1$, $a=1$, $b=N-1$ and thus $H=\frac {(N-1)(3N-4)} 2$.

Let us consider a semiregular graph that has $N_{1}$ vertices with degree $a$ and
$N_{2}$ vertices with degree $b$, $a<b$, $N=N_{1}+N_{2}$. Then $H=
\frac{N(N-1)}{2}+\left( \frac{b}{a}-1\right) N_{1}N_{2}$.\\
We deal with two examples: i) a semiregular bipartite graph and ii) a semiregular not bipartite graph.
\begin{itemize}
\item [i)] Let us consider a semiregular bipartite graph with $N_{1}$=10 vertices with degree $a=4$ and $N_{2}=4$ vertices with degree $b=10$.
For this graph we have that $H=151$, $\sigma=0.4689932$ which imply
\begin{table}[th]
\centering%
\begin{tabular}{|l|l|}
\hline
%Bound & $R^+(G)$\\ \hline
Bound (\ref{bound1}) &  $R^+(G)\ge 332$ \\ \hline
Bound (\ref{markov}) &  $R^+(G)\ge 338$ \\ \hline
Bound \eqref{eq:bound_major1} & $R^+(G)\ge 338.033$ \\ \hline
Bound \eqref{eq:new_bound} &  $R^+(G)\ge 359.64$ \\ \hline
\end{tabular}
\end{table}
\begin{center}
Table 1
\end{center}
\bigskip
\noindent Hence, bound \eqref{eq:new_bound} performs better than the others.
\item [ii)] Let us take a semiregular graph on $N$ vertices ($N$ even $\geq 8$) that is the union of a complete $K_{N/2}$ and a $N/2$-cycle such that vertex $i$ of the cycle is linked to vertex $i$ of the complete graph with a single edge, for $1\le i \le N/2$.

This graph has $N_{1}=N_{2}$ $=N/2
$, $a=3$, $b=N/2,$ thus $H=\frac{1}{24}N\left( 6N+N^{2}-12\right)$.
By \eqref{eq:new_bound} we get
\begin{equation}\label{bound_semireg1}\tag{19'}
R^+(G)\ge \frac{N\left( 228N^{2}-1152N+36N^{3}+N^{4}+1152\right) }{24\left(6N+N^{2}-12\right)}
\end{equation}

\noindent while bound (\ref{bound1}) becomes
\begin{equation}\label{bound_semireg2}\tag{14'}
R^+(G)\ge \frac{1}{8}N\left(15N-22\right)
\end{equation}

By Calculus, it is easy  to show that , for $N>8$, the bound (\ref{bound_semireg1}) is better than  both (\ref{bound_semireg2}) and (\ref{markov}). In virtue of (17), for $N>8$,  bound  (\ref{markov}) betters bound (\ref{bound_semireg2})
and they coincide for $N=8.$
\end{itemize}
We show in Table 2 a comparison between all bounds applicable to this example, for $N=20$:
\begin{table}[th]
\centering%
\begin{tabular}{|l|l|}
\hline
Bound (14') &  $R^+(G)\ge 695$ \\ \hline
Bound (\ref{markov}) &  $R^+(G)\ge 722$ \\ \hline
Bound (24) & $R^+(G)\ge 722.001$ \\ \hline
Bound (19') &  $R^+(G)\ge 848.61$ \\ \hline
\end{tabular}
\end{table}
\begin{center}
Table 2
\end{center}
Finally, looking at Table 2, the bound (19') performs always better than (24) which in turn improves both (3) and (14').
%\\For example, let us consider the graph representing the molecular structure of ethane.
% %($N=8$ and $\mathbf{d=}\left(1,1,1,1,1,1,4,4,\right) )$.
%In this case we have $N_1=6$ nodes with degree $a=1$ and  $N_2=2$ nodes with degree $b=4$ such that $K^*=64$.
%\begin{figure}[h!]
%\centering \includegraphics[height=3cm]{ethane}
%\end{figure}
%\\By \eqref{markov} we get
%\begin{equation*}
%R^+(G) \geq 98,
%\end{equation*}
%with bound (\ref{finoarbol})
%\begin{equation*}
%R^+(G) \geq 123,
%\end{equation*}
%while with bound (\ref{finoarbol1})
%\begin{equation*}
%R^+(G) \geq 125.
%\end{equation*}
%Finally, with bound \eqref{eq:second  bound}
%\begin{equation*}
%R^+(G) \geq 116,25.
%\end{equation*}
%
%If we consider the molecular structure of propane
%%($N=11$ and $\mathbf{d=}\left(
%%1,1,1,1,1,1,1,1,4,4,4,\right) )$
%\begin{figure}[h!]
%\centering \includegraphics[height=3cm]{propane}
%\end{figure}
%\newpage
%By \eqref{markov} we get
%\begin{equation*}
%R^+(G) \geq 200,
%\end{equation*}
%with bound (\ref{finoarbol}),
%\begin{equation*}
%R^+(G) \geq 265,67
%\end{equation*}
%and with the bound (\ref{finoarbol1})
%\begin{equation*}
%R^+(G) \geq 267,67
%\end{equation*}
%Finally, with bound \eqref{eq:second  bound}
%\begin{equation*}
%R^+(G) \geq 238,81
%\end{equation*}
%\smallskip
%Carrying out a direct comparison between \eqref{eq:new_bound} and
%$$
%R^+(G) \ge 2n(n-2)
%$$
%we can observe that our bound performs always better and they coincide for $d$- regular graphs.

\subsection{Full binary tree of depth $d>1$}
We consider a full binary tree of depth $d>1$ which has $N_1=2^d$ vertices of degree 1, one vertex (the root) of degree 2 and $N_2=2^d-2$ vertices of degree 3. Then $H=\dfrac{N(N-1)}{2} +2N_1+\frac 3 2 N_2 + 2N_1N_2$.
Taking $d=3$ we obtain the results summarized in the following table, which shows that our new bounds are better than the universal one (3):
\begin{table}[th]
\centering
\begin{tabular}{|l|l|}
\hline
%Bound & $R^+(G)$\\ \hline
Bound (\ref{markov}) &  $R^+(G)\ge 392$ \\ \hline
Bound \eqref{eq:bound_major1} & $R^+(G)\ge 392.14$\\ \hline
Bound \eqref{eq:new_bound} &  $R^+(G)\ge   406$\\ \hline
Bound (14) &  $R^+(G)\ge  459.6$ \\ \hline
\end{tabular}
\end{table}
\begin{center}
Table 3
\end{center}
On the other hand, if we connect all the pendant vertices of the above mentioned full binary tree, we get a graph with $N_1=3$ vertices with degree $a=2$ and $N_2=12$ vertices with degree $d=3$.
Hence, we reduce it to the previous  class of $(2,3)-$ semiregular graph and we have \\
 \begin{table}[th]
\centering%
\begin{tabular}{|l|l|}
\hline
%Bound & $R^+(G)$\\ \hline
Bound (\ref{markov}) &  $R^+(G)\ge 392$ \\ \hline
Bound (\ref{bound1}) &  $R^+(G)\ge 392$ \\ \hline
Bound \eqref{eq:bound_major1} & $R^+(G)\ge  392.12$ \\ \hline
Bound \eqref{eq:new_bound} &  $R^+(G)\ge  392.63$ \\ \hline
\end{tabular}
\end{table}
\begin{center}
Table 4
\end{center}
In this case, bound \eqref{eq:new_bound} performs slightly better than the others.

%For example, the full binary tree in the following picture, with $N=15$, $N_1=8$ and $N_2=6$,
%\begin{figure}[h!]
%\centering \includegraphics[height=3cm]{FullBinary}
%\end{figure}
%
%for bound \eqref{markov} we get $R^+(G) \geq 392$ , for bound \eqref{eq:new_bound} we get $R^+(G) \geq 406$, with the bound (\ref{finoarbol}) we have $R^+(G) \geq 457,6$ and with (\ref{finoarbol1}) $R^+(G) \geq 459,6$.

%
%Full binary tree:
%
%$N=15,$ $\sigma ^{2}=\dfrac{2}{15}\ast \dfrac{31}{9}=\allowbreak 0.459\,26$
%
%$(24)$ $\ \ L(15)=15\ast \left( \dfrac{1}{1+\dfrac{0.6777}{\sqrt{14}}}+%
%\dfrac{13^{2}}{15-1-\dfrac{0.6777}{\sqrt{14}}}\right) +14^{2}=\allowbreak
%392.\,\allowbreak 14$
%
%\bigskip
%
%Full binary tree, with connected pendant nodes:
%
%$N=15,$ $\sigma ^{2}=\dfrac{2}{15}\ast \dfrac{28}{9}=\allowbreak 0.414\,81$
%
%$\sqrt{0.414\,81}=\allowbreak 0.644\,06$
%
%$(24)$ $\ \ L(15)=15\ast \left( \dfrac{1}{1+\dfrac{0.644\,06}{\sqrt{14}}}+%
%\dfrac{13^{2}}{15-1-\dfrac{0.644\,06}{\sqrt{14}}}\right) +14^{2}=\allowbreak
%392.\,\allowbreak 12$

\section{Upper bounds}

Now with respect to upper bounds, we can prove the following simple
\begin{theorem}
For any graph $G$ we have
\begin{equation}
\label{upper}
R^+(G)\le 2|E|(N-1)R,
\end{equation}
where $R=\max_{i,j} R_{ij}$.
\end{theorem}

\begin{proof}
$R^+(G)\le R\sum_{i<j}(d_i+d_j)=2|E|(N-1)R $.
\end{proof}

The inequality (\ref{upper}) may seem like a crude estimate. In fact, it recovers the right order of the upper bound, since
$$2|E|(N-1)R\le N(N-1)^3,$$
which is only worse that the bound found in \cite{Pal2013} by the constant of the largest $N^4$ term.  For trees, (\ref{upper}) becomes
\begin{equation}
\label{uppert}
R^+(T)\le 2(N-1)^2D\le 2(N-1)^3,
\end{equation}
where $D$ is the diameter of the graph.   Again, this inequality for trees gives the right order of the upper bound of the index, except at most for the constant of the $N^3$ term, as the linear graph shows,  since for this tree $R^+(G)\sim {2 \over 3}N^3$.  On the other hand, for the star graph, $D=2$ and (\ref{uppert}) becomes $4(N-1)^2$ which is off the actual value only by the constant $4$ instead of $3$.

In reference \cite{Mark} a careful study of $R$ is carried out for distance-regular graphs, a large family for which we can apply the simple bound (\ref{uppert}) and show that $R^+(G)$ must be less than twice the universal lower bound.

\begin{theorem}
If $G$ is distance-regular with degree $k>2$ then
$$R^+(G)\le \left(2+{{188}\over {101}}\right)(N-1)^2.$$
\end{theorem}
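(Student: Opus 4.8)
The plan is to reduce the statement to the elementary upper bound \eqref{upper}, $R^+(G)\le 2|E|(N-1)R$, and then to control the maximum effective resistance $R=\max_{i,j}R_{ij}$ using the distance-regular structure. Since a distance-regular graph of degree $k$ is in particular $k$-regular, the handshake identity gives $2|E|=kN$, so \eqref{upper} reads
$$R^+(G)\le kN(N-1)R.$$
Consequently the whole theorem hinges on an estimate of the form $R\le \frac{390}{101}\cdot\frac{N-1}{kN}$: inserting it at once yields
$$R^+(G)\le kN(N-1)\cdot\frac{390}{101}\cdot\frac{N-1}{kN}=\frac{390}{101}(N-1)^2=\left(2+\frac{188}{101}\right)(N-1)^2,$$
which is exactly the claim. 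So the entire content of the proof is a uniform bound on $R$.

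To understand the right shape of this bound, I would first exploit that in a distance-regular graph $R_{ij}$ depends only on the distance $d(i,j)$; in particular every edge carries one common value $R_1$. Foster's classical identity $\sum_{(i,j)\in E}R_{ij}=N-1$ then determines it exactly: $|E|R_1=N-1$ gives $R_1=\frac{2(N-1)}{kN}$. Writing the maximum $R$ as a multiple of $R_1$, the target estimate becomes the scale-free ratio bound $R\le \frac{195}{101}R_1$, since $\frac{195}{101}\cdot\frac{2(N-1)}{kN}=\frac{390}{101}\cdot\frac{N-1}{kN}$. This is the clean form through which the hypothesis should be read: one must show that the largest effective resistance in the graph exceeds the edge resistance by at most the universal factor $\frac{195}{101}$.

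The remaining, and genuinely hard, step is precisely this ratio bound, and it is where the detailed analysis of \cite{Mark} enters; I would import it rather than reprove it. That such a universal factor exists is special to $k>2$: the cycle $C_N$ is distance-regular with $k=2$, yet there $R_1\sim 1$ while the antipodal resistance is $\frac{N}{4}$, so $R/R_1\sim \frac{N}{4}\to\infty$ and no bound of this type can hold. For $k\ge 3$ the far richer connectivity of a distance-regular graph keeps every pairwise effective resistance within a fixed multiple of $R_1$, and the explicit constant $\frac{195}{101}$ is the output of the intersection-array and spectral estimates carried out in \cite{Mark}. I expect this to be the main obstacle: the two reductions above are routine, but the uniform control of $R$ over the whole family of distance-regular graphs with $k>2$ rests entirely on that machinery. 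Once the bound $R\le\frac{195}{101}R_1$ is granted, the single substitution displayed above closes the argument.
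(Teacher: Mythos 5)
Your proof is correct and follows essentially the same route as the paper: both reduce the claim to the bound \eqref{upper} with $2|E|=kN$ and then import the estimate $R\le\left(2+\frac{188}{101}\right)\frac{N-1}{Nk}$ from \cite{Mark} as a black box. Your reformulation via Foster's identity as the ratio bound $R\le\frac{195}{101}R_1$ is a nice interpretive gloss but does not change the substance of the argument.
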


\begin{proof}
Insert the inequality $R\le\left(2+{{188}\over {101}}\right){{(N-1)}\over {Nk}}$ shown in \cite{Mark} (the equality holds only in the case of the Biggs-Smith graph) into (\ref{upper}) to prove the assertion.
\end{proof}

It is interesting to notice that the result is false for the distance-regular graphs with degree $k=2$, i.e., the cycles, for which $R^+(G)$ jumps from the quadratic values shown above for $k>2$ to the cubic value ${{N^3-N}\over 3}$.
\vskip .2 in
We conclude giving further upper bounds, in terms of the so-called spectral gap, obtained by combining ideas from Markov chains and majorization.

Recall that from (\ref{rela}) and subsequent comments we have
\begin{equation}
\label{rela2}
 R^+(G)={N\over {2|E|}}R^*(G)+\sum_{j=1}^N\sum_{i=1}^N \pi_i E_iT_j=N  \sum_{i=2}^N {1\over {1-\lambda_i}} + \sum_{j=1}^N\sum_{i=1}^N \pi_i E_iT_j
 \end{equation}
 We want to find an upper bound for the summation with the hitting times in (\ref{rela2}), for which we use  some Markov chain theory found in reference \cite{Lov}, specifically:
 \begin{equation}
 \label{lovasz}
 \sum_i \pi_iE_iT_j={1\over \pi_j}\sum_{k=2}^N{1\over {1-\lambda_k}}v_{kj}^2,
 \end{equation}
 where $v_{kj}$ is the $j$-th component of the eigenvector $v_k$ associated to the eigenvalue
 $\lambda_k$ (the vectors $v_k$ can be chosen to be orthonormal), and
 $$\sum_{k=2}^N v_{kj}^2=1-\pi_j.$$
It is clear that (\ref{lovasz}) can be bounded as follows:
$${1\over \pi_j}\sum_{k=2}^N{1\over {1-\lambda_k}}v_{kj}^2\le {1\over {(1-\lambda_2)\pi_j}}\sum_{k=2}^Nv_{kj}^2={1\over {1-\lambda_2}}{{1-\pi_j}\over \pi_j}.$$
 And so the sum of expected hitting times can be bounded as:
 $$\sum_{j=1}^N\sum_{i=1}^N \pi_i E_iT_j\le{1\over {1-\lambda_2}}\sum_j{{1-\pi_j}\over \pi_j}={1\over {1-\lambda_2}}(2|E|\sum_j{1\over d_j}-N).$$

  Now use in (\ref{rela2}) the upper bounds in \cite{BCPT1} Section 3.2 for $\sum_{i=2}^N {1\over {1-\lambda_i}}$,  to obtain the following corollaries:
\begin{corollary}
  For any $G$ we have
 \begin{equation}
 \label{final1}
 R^+(G)\le N\left({{N-k-2}\over {1-\lambda_2}}+{k\over 2}+{1\over \theta}\right)+{1\over {1-\lambda_2}}(2|E|\sum_j{1\over d_j}-N),
 \end{equation}
 where $\displaystyle k=\Bigg\lfloor {{\lambda_2(N-1)+1}\over {\lambda_2+1}}\Bigg\rfloor$ and $\displaystyle \theta=\lambda_2(N-k-2)-k+2$.
\end{corollary}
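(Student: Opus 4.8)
The plan is to read off the bound as the sum of two independent estimates, one for each summand in the decomposition (\ref{rela2}), $R^+(G)=N\sum_{i=2}^N\frac{1}{1-\lambda_i}+\sum_{j=1}^N\sum_{i=1}^N\pi_iE_iT_j$. The double sum of expected hitting times has in effect already been bounded in the lines preceding the statement: inserting the Lov\'asz identity (\ref{lovasz}), replacing each factor $\frac{1}{1-\lambda_k}$ by its largest value $\frac{1}{1-\lambda_2}$, and using $\sum_{k=2}^N v_{kj}^2=1-\pi_j$ gives $\sum_{j}\sum_{i}\pi_iE_iT_j\le\frac{1}{1-\lambda_2}\left(2|E|\sum_j\frac{1}{d_j}-N\right)$, which is precisely the last term of (\ref{final1}). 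Hence the whole task reduces to bounding the spectral term $N\sum_{i=2}^N\frac{1}{1-\lambda_i}$ by $N\left(\frac{N-k-2}{1-\lambda_2}+\frac{k}{2}+\frac{1}{\theta}\right)$, after which the two estimates simply add.

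First I would isolate $\sum_{i=2}^N\frac{1}{1-\lambda_i}$ and treat it by the majorization machinery used throughout the paper. Since $g(x)=\frac{1}{1-x}$ is convex on $(-\infty,1)$, the symmetric function $\Phi(w_2,\dots,w_N)=\sum_{i=2}^N\frac{1}{1-w_i}$ is Schur-convex. The transition matrix $P$ has zero diagonal, so $\mathrm{tr}(P)=\sum_{i=1}^N\lambda_i=0$; with $\lambda_1=1$ this forces $\sum_{i=2}^N\lambda_i=-1$, while $-1\le\lambda_i\le\lambda_2$ for $i\ge 2$. Thus $(\lambda_2,\dots,\lambda_N)$ lives in the set of non-increasing vectors with fixed sum $-1$ and entries in $[-1,\lambda_2]$, and on such a set a Schur-convex function attains its maximum at the greatest element in the majorization order. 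This maximization is exactly the estimate I would import from \cite{BCPT1}, Section 3.2.

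The core computation is to exhibit that maximal element and evaluate $\Phi$ there. It is the ``bang--bang'' vector that piles as many coordinates as possible at the upper end $\lambda_2$, puts one filler coordinate in between, and sends the remaining coordinates to the lower end $-1$. Counting gives $N-k-2$ entries equal to $\lambda_2$ and $k$ entries equal to $-1$, where $k=\left\lfloor\frac{\lambda_2(N-1)+1}{\lambda_2+1}\right\rfloor$ is the value that keeps the filler inside $[-1,\lambda_2]$; the sum constraint then pins the filler at $1-\theta$ with $\theta=\lambda_2(N-k-2)-k+2$. Evaluating $\Phi$ at this vector yields $\frac{N-k-2}{1-\lambda_2}+\frac{k}{2}+\frac{1}{\theta}$, because each $\lambda_2$-entry contributes $\frac{1}{1-\lambda_2}$, each $-1$-entry contributes $\frac12$, and the single filler contributes $\frac{1}{1-(1-\theta)}=\frac{1}{\theta}$.

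The step I expect to be the main obstacle is the bookkeeping for this maximal element: verifying that the floor defining $k$ is exactly what keeps the filler feasible, and handling the boundary situation in which $\frac{N-2}{\lambda_2+1}$ is (near) an integer, so that the filler coincides with one of the two extremes and the count of $\lambda_2$- and $(-1)$-entries shifts by one. Once the maximal element and its $\Phi$-value are secured, multiplying by $N$ and adding the hitting-time estimate of the first paragraph reproduces (\ref{final1}) verbatim, with no further calculation.
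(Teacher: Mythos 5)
Your proposal is correct and follows exactly the paper's route: the paper obtains (\ref{final1}) by combining the hitting-time estimate $\sum_{j}\sum_{i}\pi_iE_iT_j\le\frac{1}{1-\lambda_2}\bigl(2|E|\sum_j\frac{1}{d_j}-N\bigr)$ derived just before the corollary with the majorization upper bound for $\sum_{i=2}^N\frac{1}{1-\lambda_i}$ imported from \cite{BCPT1}, Section 3.2, exactly as you describe. The only difference is that you spell out the extremal ``bang--bang'' element and the bookkeeping for $k$ and $\theta$, which the paper delegates entirely to the cited reference.
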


\begin{corollary}
For any bipartite $G$ we have
  \begin{equation}
 \label{final2}
 R^+(G)\le N\left({1\over 2}+{{N-k-3}\over {1-\lambda_2}}+{k\over 2}+{1\over \theta}\right)+{1\over {1-\lambda_2}}(2|E|\sum_j{1\over d_j}-N),
 \end{equation}
 where $k$ and $\theta$ are defined above.
\end{corollary}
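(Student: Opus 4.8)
The plan is to follow exactly the template of the preceding corollary, changing only the ingredient that bounds the spectral sum so as to exploit bipartiteness. The starting point is the exact decomposition (\ref{rela2}),
$$R^+(G)=N\sum_{i=2}^N{1\over{1-\lambda_i}}+\sum_{j=1}^N\sum_{i=1}^N\pi_iE_iT_j,$$
together with the bound on the double sum of hitting times already established above, namely $\sum_{j=1}^N\sum_{i=1}^N\pi_iE_iT_j\le{1\over{1-\lambda_2}}(2|E|\sum_j{1\over d_j}-N)$. Thus the whole problem reduces to producing a sharp upper bound for $N\sum_{i=2}^N{1\over{1-\lambda_i}}$, and it is precisely here that the hypothesis of bipartiteness enters.

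First I would recall the spectral feature that distinguishes bipartite graphs: for a connected bipartite graph the spectrum of the transition matrix $P$ is symmetric about $0$, and in particular the smallest eigenvalue is $\lambda_N=-1$. Consequently the term ${1\over{1-\lambda_N}}={1\over 2}$ is pinned down exactly and can be extracted from the sum,
$$\sum_{i=2}^N{1\over{1-\lambda_i}}={1\over 2}+\sum_{i=2}^{N-1}{1\over{1-\lambda_i}},$$
leaving only the $N-2$ free eigenvalues $\lambda_2,\dots,\lambda_{N-1}$ to be controlled. Since $\operatorname{tr}(P)=0$ forces $\sum_{i=2}^N\lambda_i=-1$, removing $\lambda_N=-1$ leaves the constraint $\sum_{i=2}^{N-1}\lambda_i=0$ on the remaining eigenvalues.

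The next step is to apply the majorization machinery to this constrained set of $N-2$ eigenvalues exactly as in \cite{BCPT1}, Section 3.2: the function $\lambda\mapsto{1\over{1-\lambda}}$ is convex, so $\sum{1\over{1-\lambda_i}}$ is Schur-convex and its maximum over the relevant majorization-ordered set is attained at the maximal element. This yields the bipartite bound
$$\sum_{i=2}^N{1\over{1-\lambda_i}}\le{1\over 2}+{{N-k-3}\over{1-\lambda_2}}+{k\over 2}+{1\over\theta},$$
with $k$ and $\theta$ as defined in the general corollary. Multiplying by $N$ and adding the hitting-time bound gives (\ref{final2}).

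The one genuine obstacle is bookkeeping: one must verify that the bipartite upper bound on the spectral sum quoted from \cite{BCPT1} is applied under the correct constraints, i.e. that extracting the fixed eigenvalue $\lambda_N=-1$ shifts the count in the maximal majorizing vector from $N-k-2$ to $N-k-3$ while introducing the isolated ${1\over 2}$ term. Everything else is a mechanical substitution identical to the non-bipartite case, so apart from this verification no new analytic difficulty arises.
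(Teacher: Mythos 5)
Your proposal is correct and follows essentially the same route as the paper: decompose $R^+(G)$ via (\ref{rela2}), bound the double sum of hitting times by ${1\over{1-\lambda_2}}(2|E|\sum_j{1\over d_j}-N)$ as already established, and invoke the bipartite upper bound from \cite{BCPT1}, Section 3.2 for $N\sum_{i=2}^N{1\over{1-\lambda_i}}$. The paper's own proof is just this one-line substitution; your added explanation of why bipartiteness pins $\lambda_N=-1$ and produces the isolated ${1\over 2}$ term with the count shifted to $N-k-3$ is a faithful account of what the cited spectral bound does.
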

 %Notice that the first summands of (\ref{final1}) and (\ref{final2}) contribute an order of magnitude about ${N^2\over {1-\lambda_2}}$, but the second summand might be larger due to the factor $|E|\sum_j{1\over  d_j} \le N^3$.   This factor can be rewritten as
% $$|E|\sum_j{1\over  d_j}=N+\sum_{i<j}\left({d_i\over d_j}+{d_j\over d_i}\right).$$

For the $N$-star graph we have that $\lambda_2=0$, $k=1$ and $\theta=1$ and therefore the bound (\ref{final2}) becomes $3N^2-7N+4$ and the actual value of $R^+$ is attained.  This can be extended to the complete bipartite graph $K_{r,s}$, for arbitrary $r, s$, for which bound (\ref{final2}) becomes
\begin{equation}
\label{rs}
3r^2+3s^2+2rs-3r-3s,
\end{equation}
 whose order is always $N^2$, and improves the bound $2|E|(N-1)D=4rs(r+s-1)$.  The smallest value of (\ref{rs}) occurs for $r=s={N\over 2}$, where it takes the value $N(2N-3)$, which is equal to the actual value $N(2N-3)$ of $R^+(G)$.

%\vskip .3 in
%-----------------------------------------------------------
%
%$*$ Could we work on the MAXIMUM of  $\sum_{i<j}\left({d_i\over d_j}+{d_j\over d_i}\right)$ with majorization for interesting families of graphs that make it $O(N^2)$?

\section{Conclusions}

We have derived upper and lower bounds for $R^+(G)$ whose expressions do not depend on the effective resistances, which in general are difficult to compute,  but on a limited number of graph invariants.  These bounds are not mutually exclusive and their performance depends on the particular structure of the graphs in question.  Here is a table summarizing our best results concerning the lower bounds:
\begin{table}[th!]
\centering
\begin{tabular}{|l|l|l|}
\hline
\textbf{Graph} & \textbf{Bound} & \\ \hline
\multirow{3}{*}{Generic} & $R^+(G) \ge  2N(N-1)- \dfrac {4|E|} {1+d_1}$ & (\ref{bound1})\\
    & $R^+(G) \ge N(N-3) + H + \left [ \frac {N(N-1)} 2\right ]^2 \frac 1 {H}$ & (19) \\
      &  $R^+(G)\ge N\left[ {1\over {1+{\sigma \over \sqrt{N-1}}}}+{{(N-2)^2}\over {N-1-{\sigma \over \sqrt{N-1}}}}\right]+(N-1)^2 $ & (24) \\ \hline
$M$ leaves & $R^+(G) \ge N(N-4) +2|E|\left[M+{{(N-M)^2}\over {2|E|-M}}\right]$ & (\ref{fino}) \\ \hline
Tree & $R^+(T)\ge N(N-2)+2(N-1)\left[M+{{(N-M)^2}\over {2(N-1)-M}}\right]-2(N-1)$ & (\ref{finoarbol1})\\ \hline
\end{tabular}
\end{table}
\begin{center}
Table 5
\end{center}
\newpage
%for a generic graph the bound are not comparable, but we know from the performed analysis that bound (\ref{bound1}) is better than  \eqref{markov} when
%$(1+d_1)(N-1)>2|E|$ and they coincide, for example, in case of  trees and complete graphs.

\end{document}